\newcommand{\define}{\textit}
\newcommand{\C}{\mathds{C}}
\newcommand{\N}{\mathds{N}}
\newcommand{\Q}{\mathds{Q}}
\newcommand{\Z}{\mathds{Z}}
\renewcommand{\P}{\mathds{P}}
\newcommand{\id}{\mathrm{id}}
\newcommand{\pt}{\mathrm{pt}}
\DeclareMathOperator{\Hom}{Hom}
\newcommand{\Fl}{ {Fl} }
\newcommand{\ee}{\mathrm{e}}    
\newcommand{\dlim}{\varinjlim}
\newcommand{\ilim}{\varprojlim}
\newtheoremstyle{scthm}%
{}{}{\itshape}{}{\scshape}{.}{ }{}
\newtheoremstyle{scdef}%
{}{}{}{}{\scshape}{.}{ }{}
\theoremstyle{scthm}
\newtheorem{theorem}{Theorem}
\newtheorem{lemma}[theorem]{Lemma}
\newtheorem{proposition}[theorem]{Proposition}
\newtheorem{theoremAlph}{Theorem}
\newtheorem*{thm*}{Theorem}
\newtheorem*{cor*}{Corollary}
\newtheorem*{lem*}{Lemma}
\begin{document}

\newcommand{\isom}{\cong}
\renewcommand{\setminus}{\smallsetminus}
\renewcommand{\phi}{\varphi}
\renewcommand{\hat}{\widehat}

\title[\v{C}ech cohomology of infinite projective spaces]{\v{C}ech cohomology of infinite projective spaces,\\flag manifolds, and related spaces}
\author{David Anderson}
\address{Department of Mathematics, The Ohio State University, Columbus, OH 43210}
\email{anderson.2804@math.osu.edu}

\author{Matthias Franz}
\address{Department of Mathematics, University of Western Ontario, London, Ont. N6A 5B7, Canada}
\email{mfranz@uwo.ca}

\thanks{DA was partially supported by NSF CAREER DMS-1945212, and by a Membership at the Institue for Advanced Study funded by the Charles Simonyi Endowment.  MF was supported by an NSERC Discovery Grant.}

\date{December 15, 2025}


\begin{abstract}
  We compute the \v{C}ech cohomology ring of a countable product of infinite projective spaces, and that of an infinite flag manifold.  The method of our first result in fact computes the cohomology ring of a countably infinite product of paracompact Hausdorff spaces, under some mild assumptions.
  \par\noindent
\end{abstract}

\maketitle

\section{Introduction}

Let $\C\P^\infty$ be infinite-dimensional complex projective space, topologized as a rising union of finite-dimensional projective spaces, $\C\P^\infty=\bigcup \C\P^k$, or equivalently, as the quotient space $\C\P^\infty = (\C^\infty\setminus 0)/\C^*$, where $\C^\infty$ has again the colimit topology of the spaces~$\C^{k}$.  The latter description makes $\C\P^\infty$ a model for the classifying space of $\C^*$, and we have the well known calculation
\[
  H^*\C\P^\infty = H^*B\C^* = H^*_{\C^*}(\pt) = \Z[t],
\]
a polynomial ring in one generator of degree $2$, often taken to be the Chern class of the tautological line bundle.  Here cohomology is taken with integer coefficients, and one may use any standard cohomology theory: the space $\C\P^{\infty}$ is locally contractible, paracompact, and Hausdorff, so singular, \v{C}ech, Alexander--Spanier, and sheaf cohomology all agree.  (See, e.g., \cite[Corollary~6.9.5, Exercise~6.D3]{spanier} and \cite[\S I.7 and Chapter~III]{bredon}.)

Likewise, the cohomology ring of a finite product $(\C\P^\infty)^{\times n}$ is well known to be a polynomial ring in $n$ generators of degree $2$:
\[
  H^*(\C\P^\infty)^{\times n} = H^*_{(\C^*)^n}(\pt) = \Z[t_1,\ldots,t_n].
\]
Again, this calculation is valid in any standard cohomology theory.

The first aim of this note is to compute the analogous ring for the countably infinite product of infinite projective spaces.  The principal bundle $(\C^\infty\setminus0)^{\times \N} \to (\C\P^\infty)^{\times \N}$ has contractible total space, so $(\C\P^\infty)^{\times \N}$ is a model for the classifying space of the infinite-dimensional torus $T = (\C^*)^{\times \N}$,
with the caveat that this bundle is not locally trivial in the product topology.  (It becomes locally trivial in the finer box topology, but we will not use this.)  
Equivariant cohomology with respect to~$T$ comes up in several contexts---see, e.g., \cite{liou-schwarz,lls,infschub}---and so does the cohomology of its classifying space.

Using \v{C}ech cohomology\footnote{Throughout, we use the fact that the Alexander--Spanier and \v{C}ech theories agree---see \cite[Exercise~6.D3]{spanier} or \cite[\S I.7]{bredon}---so for brevity we often refer simply to \v{C}ech cohomology.
} with integer coefficients, we will show that
\begin{equation}\label{e.cor1}
  H^* (\C\P^\infty)^{\times \N} = H^*_T(\pt) = \Z[t_1,t_2,\ldots],
\end{equation}
a polynomial ring in countably many variables of degree $2$, where $t_i$ may be regarded as the Chern class of the tautological line bundle pulled back along the projection to the $i$th factor.  In fact, the same holds with $\Q$ coefficients: see the Corollary at the end of this introduction.  The calculation uses a construction of $(\C\P^\infty)^{\times \N}$ as an inverse limit of spaces $(\C\P^\infty)^{\times n}$.

This time, the choice of cohomology theory matters.  The rational version of the asserted isomorphism \eqref{e.cor1} says that $H^2\big((\C\P^\infty)^{\times \N}; \Q\big) = \bigoplus_{i\geq 1} \Q t_i$ is a vector space of countably infinite rank, so it is not the dual of any vector space (e.g., by the Erd\H{o}s--Kaplansky theorem). The universal coefficient theorem therefore implies that it cannot be the singular cohomology of any space.

Nevertheless, the calculation of $H^*(\C\P^\infty)^{\times \N}$ gives the ``expected'' answer, in the following sense.
Let $X_{\leq n} = (\C\P^\infty)^{\times n}$, and for $n\geq n'$, consider the projection $X_{\leq n} \to X_{\leq n'}$ onto the first $n'$ factors.
So the $X_{\leq n}$ form an inverse system, with limit
\[
  X = \ilim_n X_{\leq n} = (\C\P^\infty)^{\times \N}.
\]
Reversing the arrows, there is a natural homomorphism of cohomology rings
\begin{equation}\label{e.colim-map}
  \dlim H^*X_{\leq n} \to H^*(\ilim X_{\leq n}).
\end{equation}
If this map were known to be an isomorphism, we would obtain
\[
  H^*X = \dlim_n H^* X_{\leq n} = \dlim_n \Z[t_1,\ldots,t_n] = \Z[t_1,t_2,\ldots],
\]
as asserted by \eqref{e.cor1}.

When the spaces involved are compact, the continuity property for \v{C}ech--Alexan\-der--Spanier cohomology (reviewed below) implies that the homomorphism \eqref{e.colim-map} is an isomorphism.
For inverse limits of non-compact spaces, however, simple counterexamples show that \eqref{e.colim-map} can fail to be an isomorphism in any reasonable cohomology theory.\footnote{%
  For instance, take $X_n$ to be the cylinder $S^1\times [n,\infty)$, with $X_n \to X_{n'}$ the inclusion.
  Each inclusion induces isomorphisms on cohomology, so $\dlim H^1 X_n = \Z$.  But the inverse limit is
  the intersection $\bigcap X_n = \emptyset$, so $H^1(\ilim X_n) =0$.  See also \cite{watanabe}.}
In our setting, the spaces~$X_{\leq n} = (\C\P^\infty)^{\times n}$ are not compact, so further argument is required.

The reasoning sketched above has led several authors (including the present first author) to assert the calculation \eqref{e.cor1} without sufficient justification, so it seems worth providing details.  In particular, this note fills a gap left in \cite[\S A.8]{ecag}.

We will establish the isomorphism~\eqref{e.cor1} in a more general context.  
From now on, unless otherwise indicated, we use \v{C}ech cohomology with coefficients in a countable principal ideal domain $R$.  (Typical examples are $\Z$, $\Q$, finite fields, and subrings of $\Q$.) 
Recall that a CW~complex is said to be of \define{finite type} if it has only finitely many cells in each dimension.

\begin{theoremAlph}
  \label{thm:A}
  Let $X_1$,~$X_{2}$,~\dots  be a countable family of CW~complexes of finite type.
  Write $X$ for their product, and set $X_{\le n}=X_{1}\times\dots\times X_{n}$.
  The natural homomorphism
  \[
    \dlim_n H^*(X_{\leq n}) \xrightarrow{\sim} H^*X,
  \]
  induced by the canonical projections~$X\to X_{\le n}$, is an isomorphism of graded $R$-algebras.
\end{theoremAlph}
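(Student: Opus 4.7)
\emph{Injectivity} is easy: fix basepoints $*_i\in X_i$, so that every projection $p_n\colon X\to X_{\le n}$ admits the continuous section $s_n(x_1,\dots,x_n)=(x_1,\dots,x_n,*_{n+1},*_{n+2},\dots)$. Each induced map $p_n^*$ is therefore split injective, and injectivity of the direct limit map follows.

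For \emph{surjectivity}, the plan is to exploit the continuity of \v{C}ech cohomology on inverse limits of compact Hausdorff spaces, after approximating $X$ by compact subspaces built from skeletal data. For each $\mathbf{k}=(k_i)\in\N^\N$ set $K_\mathbf{k}=\prod_i X_i^{(k_i)}$, where $X_i^{(k_i)}$ denotes the $k_i$-skeleton. By the finite-type hypothesis each $X_i^{(k_i)}$ is a finite CW complex, so $K_\mathbf{k}$ is compact Hausdorff by Tychonoff's theorem, and is the inverse limit of the compact finite partial products $K_{\mathbf{k},\le n}=\prod_{i\le n}X_i^{(k_i)}$ under coordinate projections. \v{C}ech continuity then yields $\check H^*(K_\mathbf{k})=\dlim_n H^*(K_{\mathbf{k},\le n})$. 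For a fixed degree $p$, the inclusion $K_{\mathbf{k},\le n}\hookrightarrow X_{\le n}$ induces an isomorphism on $H^p$ once $k_i\ge p+1$ for all $i\le n$, by cellular cohomology and the finite-type assumption. Exchanging directed colimits over $\mathbf{k}$ and $n$ then identifies $\dlim_\mathbf{k}\check H^p(K_\mathbf{k})$ with $\dlim_n H^p(X_{\le n})$.

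The principal obstacle is then to identify $\check H^p(X)$ with $\dlim_\mathbf{k}\check H^p(K_\mathbf{k})$ via the inclusions $K_\mathbf{k}\hookrightarrow X$. Although $X=\bigcup_\mathbf{k}K_\mathbf{k}$ holds set-theoretically, the product topology on $X$ is strictly coarser than the colimit topology on this directed union, so the identification does not follow from \v{C}ech continuity alone. I would bridge the gap with a cylinder-cover analysis: basic open cylinders $U_1\times\cdots\times U_m\times\prod_{i>m}X_i$ form a basis for the product topology on $X$, and any $(p{+}1)$-fold intersection of such cylinders is again a cylinder of finite depth. Hence any \v{C}ech $p$-cocycle on a cylinder cover is determined, on each simplex of the nerve, by data in finitely many coordinates, and the finite-type hypothesis should allow such a cocycle to be refined so that its class is uniquely detected by restriction to the $K_\mathbf{k}$. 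Making this refinement precise --- in particular, coping with cylinder covers of unbounded depth --- is the technical crux of the argument.
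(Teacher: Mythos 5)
Your injectivity argument (splitting the projections via basepoint sections) is fine, but it is not where the difficulty lies, and your surjectivity argument has a genuine gap that you yourself flag: the identification of $\check H^p(X)$, with its product topology, with the (co)limit of the cohomologies of the compact skeletal products $K_{\mathbf{k}}$ is exactly the hard point, and the ``cylinder-cover analysis'' you sketch is not carried out. As stated it is not clear it can be pushed through by bare Čech-cocycle manipulation: a cover of the non-compact space $X$ by basic cylinders may require unbounded depth, the nerve of such a cover does not obviously localize to finitely many coordinates uniformly, and nothing in the sketch explains why a class vanishing on every $K_{\mathbf{k}}$ (or detected there) is controlled on $X$ itself. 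There is also a structural slip: for $\mathbf{k}\le\mathbf{k}'$ the inclusions $K_{\mathbf{k}}\subseteq K_{\mathbf{k}'}$ induce maps $\check H^p(K_{\mathbf{k}'})\to\check H^p(K_{\mathbf{k}})$, so the family $\{\check H^p(K_{\mathbf{k}})\}$ is an \emph{inverse} system over the directed set of $\mathbf{k}$'s; the ``directed colimit over $\mathbf{k}$'' you want to exchange with the colimit over $n$ is not defined by these maps, and inverse limits do not commute with direct limits, so even the preliminary identification with $\dlim_n H^p(X_{\le n})$ needs repair (the restriction maps are not eventually isomorphisms, since any fixed $\mathbf{k}$ truncates infinitely many factors).

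The missing step is precisely what the paper's proof supplies, by a different mechanism. For a fixed degree one takes a \emph{uniform} skeletal truncation $X^{(k)}=\prod_m X_m^{(k)}$ (product of $(k+2)$-skeleta), which is compact, and proves that restriction $H^{i}(X)\to H^{i}(X^{(k)})$ is an isomorphism for $i\le k$. This uses three inputs your sketch does not have: paracompactness of $X$ (via hereditary paracompactness of CW complexes and the Dydak--Geoghegan lemma for countable inverse limits), so that \emph{tautness} applies to the closed compact subspace $X^{(k)}\subseteq X$, with the cylinder neighborhoods $U=V\times X_{>n}$ cofinal among all neighborhoods; and Bartik's K\"unneth formula for $P\times Y$ with $P$ locally contractible and $P\times Y$ paracompact, which computes $H^{i}(V\times X_{>n})$ and $H^i(X_{\le n}^{(k)}\times X_{>n})$ and lets the neighborhood colimit be evaluated. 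Once $H^{i}(X)\cong H^{i}(X^{(k)})$ is known, continuity for the compact inverse limit $X^{(k)}=\ilim_n X^{(k)}_{\le n}$ and the skeletal isomorphisms $H^{i}(X_{\le n})\cong H^{i}(X_{\le n}^{(k)})$ close the square. If you want to salvage your route, you would need to prove an analogue of this comparison between $X$ and its compact approximations; your cover-refinement idea would in effect have to reprove tautness plus a K\"unneth statement for the infinite tail, which is the technical content you have left open.
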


The space $X_{\leq n}$ is again a CW~complex since each~$X_{k}$ has at most countably many cells \cite[Theorem~A.6]{hatcher}.
And since CW~complexes are locally contractible and paracompact Hausdorff---see, e.g., \cite{LundellWeingram:1969} or \cite[Appendix]{hatcher}---\v{C}ech cohomology agrees with singular cohomology for these spaces.
(This fact also shows that the countable product~$X$ is not a CW~complex in general.)

The calculation of the cohomology of $(\C\P^\infty)^{\times \N}$ is an immediate consequence:

\begin{cor*}
  For \v{C}ech cohomology with coefficients in any countable principal ideal domain $R$, we have
  \[
   H^*\Bigl( (\C\P^\infty)^{\times \N} \Bigl) \isom \dlim_n R[t_1,t_2,\ldots,t_n] = R[t_1,t_2,\ldots].
  \]
  In particular, this holds with $R=\Z$ or $R=\Q$.
\end{cor*}

Using this calculation together with an explicit homotopy equivalence, we will deduce a computation of the cohomology ring of the infinite flag manifold (\Cref{thm:B} in \Cref{s.flags}).

\section{Proof of \Cref{thm:A}}

We first collect the required ingredients.  Recall that $R$ indicates a countable PID, and coefficients for cohomology are assumed to be taken in $R$, when unspecified.  

We use the general {\it tautness} property of \v{C}ech cohomology:
this says that for any closed subspace $Y$ of a paracompact space $Z$ {and coefficients in any $R$-module $M$}, the natural homomorphism
\[
  \dlim H^k(U;{M}) \to H^k(Y;{M})
\]
is an isomorphism, where the direct limit is over all neighborhoods $U$ of $Y$ in $Z$ (see \cite[Theorem~6.6.2]{spanier}).

There is also the above-mentioned {\it continuity} property: for an inverse system of compact Hausdorff spaces $Z_n$, with limit $Z$, the homomorphism
\[
 \dlim H^k(Z_n;{M}) \to H^k(Z;{M})
\]
is an isomorphism, again for any $R$-module $M$ \cite[Theorem~6.6.6]{spanier}.

We make use of the following general K\"unneth formula for \v{C}ech cohomology.
\begin{proposition}
  \label{thm:bartik}
  Assume that $P$ is locally contractible and that $P\times Y$ is paracompact Hausdorff.
  Then for any~$k\ge0$ there is a natural isomorphism
\[
  H^k( P \times Y) \isom \bigoplus_{i=0}^k H^i(P; H^{k-i}(Y)), \qedhere
\]
functorial in $P$ and $Y$.
\end{proposition}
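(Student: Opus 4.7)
The plan is to analyze the Leray spectral sequence for the projection $\pi\colon P \times Y \to P$. Since $P$ is locally contractible and paracompact Hausdorff, and $P \times Y$ is paracompact Hausdorff by hypothesis, \v{C}ech and sheaf cohomology agree on these spaces, and the Leray spectral sequence is available in its sheaf-theoretic form:
\[
  E_2^{p,q} = H^p(P;\, R^q\pi_*\Z) \Longrightarrow H^{p+q}(P \times Y).
\]

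The first step is to identify the higher direct image sheaves. The stalk $(R^q\pi_*\Z)_p$ is computed, by tautness, as the direct limit $\dlim_{U\ni p} H^q(U \times Y)$ over open neighborhoods $U$ of $p$ in $P$. By local contractibility, a cofinal family of such neighborhoods deformation retracts onto $\{p\}$; the corresponding deformation of $U \times Y$ onto $\{p\} \times Y \cong Y$ makes $H^q(U \times Y) \to H^q(Y)$ an isomorphism. These identifications glue into a global isomorphism $R^q\pi_*\Z \cong \underline{H^q(Y)}$ with the constant sheaf, implemented uniformly by pullback along the second projection $\pi_Y\colon P \times Y \to Y$. Hence $E_2^{p,q} = H^p(P;\, H^q(Y))$.

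The second step is to establish degeneration at $E_2$ and splitting as a direct sum. For degeneration, the algebra homomorphism $\pi_Y^*\colon H^*(Y) \to H^*(P \times Y)$ splits the edge map onto $E_\infty^{0,*} \hookrightarrow E_2^{0,*} = H^*(Y)$; combining this with multiplicativity of the spectral sequence and its module structure over the base row $E_r^{*,0} = H^*(P)$, one concludes that all higher differentials must vanish. For the splitting, the key point is that the isomorphism $R^q\pi_*\Z \cong \underline{H^q(Y)}$ is \emph{canonical}: cup products of $\pi_P^*$-classes with $\pi_Y^*$-classes provide explicit lifts of each summand $H^p(P;\, H^q(Y))$ back to $H^{p+q}(P \times Y)$, and these assemble into a natural direct-sum decomposition. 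Functoriality in $P$ and $Y$ then follows from the naturality of every construction used.

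The main obstacle I anticipate is promoting degeneration of the spectral sequence into an honest direct-sum isomorphism, rather than merely an identification of the associated graded. This is subtle because $H^p(P;\, H^q(Y))$ is not a pure tensor product but an Ext-corrected group via universal coefficients, so a naive Leray--Hirsch argument does not apply off the shelf; the resolution leverages the canonical constancy of $R^q\pi_*\Z$ provided by $\pi_Y^*$, combined with the cup product, to exhibit the required compatible splittings.
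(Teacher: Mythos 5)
Your Leray spectral sequence route breaks down precisely at the point you flag as the ``main obstacle,'' and the fix you propose does not work. First, the degeneration argument: the Leray--Hirsch-style reasoning (fiber classes extend via $\pi_Y^*$, plus multiplicativity and the $H^*(P)$-module structure) only controls differentials on the subalgebra generated by $E_2^{*,0}$ and $E_2^{0,*}$, i.e.\ on the image of $H^p(P;\Z)\otimes H^q(Y;\Z)$ in $E_2^{p,q}=H^p(P;H^q(Y))$. That image is in general a proper subgroup: $H^p(P;H^q(Y))$ contains Tor-type contributions (and $H^q(Y)$ need not be finitely generated or torsion-free), so nothing in your argument forces the differentials to vanish on the rest of $E_2^{p,q}$. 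Second, and more fundamentally, the splitting: ``cup products of $\pi_P^*$-classes with $\pi_Y^*$-classes'' only produce classes in the image of the cross product $H^*(P;\Z)\otimes H^*(Y;\Z)\to H^*(P\times Y)$, so they cannot furnish lifts of all of $H^p(P;H^q(Y))$; there is simply no cup-product construction of the needed natural map $H^p(P;H^{q}(Y))\to H^{p+q}(P\times Y)$, and without it you get at best an associated-graded statement (even granting degeneration), not the asserted natural direct-sum isomorphism. There are also smaller gaps in the $E_2$-identification (open sets $U\times Y$ need not be paracompact, so your appeal to tautness/\v{C}ech-style homotopy invariance there needs replacing by genuinely sheaf-theoretic arguments), but these are repairable; the degeneration-plus-splitting step is not, as written.

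For comparison: the paper does not prove this proposition by a fibration spectral sequence at all. It cites Bredon's double-complex argument for compact $P$, and for the stated generality it invokes Bartik's theorem, whose proof uses the representability of \v{C}ech cohomology of paracompact spaces by homotopy classes of maps into Eilenberg--MacLane spaces, together with the splitting of the relevant mapping space into a product of Eilenberg--MacLane factors with homotopy groups $H^i(Y)$. That machinery is exactly what produces the natural maps $H^i(P;H^{k-i}(Y))\to H^k(P\times Y)$ that your proposal is missing; if you want to avoid citing Bartik, you would need to reconstruct something equivalent (or a chain-level double-complex argument), not a Leray--Hirsch argument.
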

When $P$ is compact, a proof via the spectral sequence of a double complex may be found in Bredon's book. (Use the short exact sequence of \cite[Theorem~IV.7.6]{bredon} together with the universal coefficient theorem for sheaf cohomology \cite[Theorem~II.15.3]{bredon}.)  
The version stated above is due to Bartik, and uses a natural isomorphism between $k$th \v{C}ech cohomology and homotopy classes of maps into the Eilenberg--MacLane space $K({R},k)$ \cite[Theorem~C(b)]{bartik}.
%

In preparation for the proof of our main result, we state some lemmas.
Writing $X_n^{(k)}$ for the $(k+2)$-skeleton of~$X_{n}$, we have an isomorphism
\begin{equation}
  \label{eq:restriction-Xk-Xkp}
  H^i(X_n;M) \to H^i(X_n^{(k)};M)
\end{equation}
for~$i\le k+1$ and~$M=R$, hence also for~$i\le k$ and any~$M$ by the universal coefficient theorem for singular cohomology.

In addition to the spaces~$X$ and~$X_{\le n}$ we introduce
\begin{equation*}
  X_{>n} = \prod_{m>n} X_m,
\end{equation*}
and we define $X^{(k)}$, $X_{\leq n}^{(k)}$, and $X_{>n}^{(k)}$
as the analogous products involving the spaces $X_{m}^{(k)}$. Note that the latter three products are all compact. Moreover, $X_{\leq n}^{(k)}$ contains the $(k+2)$-skeleton of~$X_{\leq n}$, so in fact the $(k+2)$-skeleta of $X_{\leq n}^{(k)}$ and of $X_{\leq n}$ are the same, which yields an isomorphism of the form~\eqref{eq:restriction-Xk-Xkp}  for~$X_{\leq n}$ in place of~$X_{n}$.

CW~complexes are not only paracompact, but in fact \define{hereditarily paracompact}
in the sense that every subspace is again paracompact; see~\cite[Corollary~II.4.4]{LundellWeingram:1969}.
Because we can express the countable product~$X$ as the inverse limit
\begin{equation*}
  X = \ilim_{n} X_{\le n},
\end{equation*}
this implies that $X$ is again paracompact \cite[Lemma~7]{DydakGeoghegan:1986}, and so are all~$X_{>n}$.

\begin{lemma}
  \label{thm:iso-X-Xkn}
  The map
  \begin{equation*}
    H^{i}(X) \to H^{i}(X_{\le n}^{(k)}\times X_{>n})
  \end{equation*}
  is an isomorphism for any~$n\ge0$ and any~$i\le k$.
\end{lemma}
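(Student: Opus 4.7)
The plan is to apply Bartik's Künneth formula (\Cref{thm:bartik}) to both products in the statement, using the factorizations $X = X_{\le n} \times X_{>n}$ and $X_{\le n}^{(k)} \times X_{>n}$, and then to compare the two decompositions via the skeleton isomorphism in the first factor.

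I first verify the hypotheses of \Cref{thm:bartik}. For~$X$, take $P = X_{\le n}$ and $Y = X_{>n}$: the finite product~$X_{\le n}$ is itself a CW~complex and so locally contractible, while $X$ has already been shown to be paracompact Hausdorff in the paragraph preceding the lemma. For $X_{\le n}^{(k)} \times X_{>n}$, take $P = X_{\le n}^{(k)}$: this is a finite product of finite CW~complexes (recall that each~$X_{n}$ has finite type), hence compact and locally contractible, and its product with the paracompact Hausdorff space~$X_{>n}$ is again paracompact Hausdorff since the product of a compact space with a paracompact Hausdorff one is paracompact Hausdorff. Both applications then yield natural isomorphisms
\begin{align*}
H^i(X) &\isom \bigoplus_{j+l=i} H^j\bigl(X_{\le n};\, H^l(X_{>n})\bigr), \\
H^i(X_{\le n}^{(k)} \times X_{>n}) &\isom \bigoplus_{j+l=i} H^j\bigl(X_{\le n}^{(k)};\, H^l(X_{>n})\bigr),
\end{align*}
under which the map in the lemma corresponds to the direct sum of the maps induced by the inclusion $X_{\le n}^{(k)} \hookrightarrow X_{\le n}$ in the first argument.

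It then suffices to show each summand map is an isomorphism. Every summand has $j \le i \le k$, and since $X_{\le n}^{(k)}$ contains the $(k+2)$-skeleton of~$X_{\le n}$, the observation~\eqref{eq:restriction-Xk-Xkp} (valid for arbitrary coefficients once $j\le k$) asserts that $H^j(X_{\le n}; M) \to H^j(X_{\le n}^{(k)}; M)$ is an isomorphism for every~$M$. Taking $M = H^l(X_{>n})$ and summing over $j+l=i$ yields the lemma.

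The only step that takes genuine care is the verification of the paracompactness hypothesis for \Cref{thm:bartik}, together with the use of naturality of Bartik's isomorphism in~$P$ to identify the comparison map with the stated direct sum of restriction maps; both issues are handled by facts already collected in the paper, so I do not anticipate any further obstacle.
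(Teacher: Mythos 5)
Your proposal is correct and follows the same route as the paper: apply \Cref{thm:bartik} to both products and reduce to the isomorphism $H^{j}(X_{\le n};M)\to H^{j}(X_{\le n}^{(k)};M)$ for $j\le k$ and arbitrary coefficients, which is the extension of~\eqref{eq:restriction-Xk-Xkp} to $X_{\le n}$ noted before the lemma. The only difference is that you spell out the paracompactness and naturality checks that the paper leaves implicit.
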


\begin{proof}
  By \Cref{thm:bartik}, this reduces to the fact that the map
  \begin{equation*}
    H^{*}\bigl(X_{\le n};H^{*}(X_{>n})\bigr) \to H^{*}\bigl(X_{\le n}^{(k)};H^{*}(X_{>n})\bigr)
  \end{equation*}
  is an isomorphism in total degree~$i\le k$, which is a special case of what has been discussed above.
\end{proof}

\begin{lemma}
  \label{thm:iso-dlim-V}
  For any~$n\ge0$ and any~$i\le k$ there is an isomorphism
  \begin{equation*}
    \dlim_{V} H^{i}(V\times X_{>n}) = H^{i}(X^{(k)}_{\le n}\times X_{>n})
  \end{equation*}
  where $V$ ranges over the open neighborhoods of~$X^{(k)}_{\le n}$ in~$X_{\le n}$.
\end{lemma}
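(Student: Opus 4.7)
The plan is to reduce the statement to the tautness property of \v{C}ech cohomology applied to the closed inclusion $X_{\le n}^{(k)}\hookrightarrow X_{\le n}$, by first decomposing both sides via Bartik's K\"unneth formula (\Cref{thm:bartik}) and then interchanging the direct limit with the resulting finite direct sum.

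First I would verify that \Cref{thm:bartik} applies both to $V\times X_{>n}$ for each open neighborhood $V$ of $X_{\le n}^{(k)}$ in $X_{\le n}$, and to $X_{\le n}^{(k)}\times X_{>n}$. Local contractibility of $V$ is automatic, as $V$ is open in the CW~complex $X_{\le n}$, and $X_{\le n}^{(k)}$ is locally contractible as a finite CW~complex. Paracompactness of $X_{\le n}^{(k)}\times X_{>n}$ is immediate since $X_{\le n}^{(k)}$ is compact and $X_{>n}$ is paracompact. Paracompactness of $V\times X_{>n}$ can be handled by the same inverse-limit argument used in the text for $X$ itself: write $V\times X_{>n}=\ilim_m V\times X_{n+1}\times\dots\times X_m$, observe that each term is an open subspace of the CW~complex $X_{\le m}$ and is therefore paracompact (using hereditary paracompactness of CW~complexes), and then invoke the Dydak--Geoghegan-type lemma already cited. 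Granting this, \Cref{thm:bartik} yields natural isomorphisms
\[
  H^i(V\times X_{>n})\;\cong\;\bigoplus_{j=0}^{i} H^j\bigl(V;\,H^{i-j}(X_{>n})\bigr),
\]
and analogously for $X_{\le n}^{(k)}\times X_{>n}$, compatibly with the restriction maps induced by the inclusions $X_{\le n}^{(k)}\subseteq V$.

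Since direct limits commute with finite direct sums, taking $\dlim_V$ on the left-hand side gives
\[
  \dlim_V H^i(V\times X_{>n})\;\cong\;\bigoplus_{j=0}^{i}\dlim_V H^j\bigl(V;\,H^{i-j}(X_{>n})\bigr).
\]
Now the tautness property of \v{C}ech cohomology, applied to the closed subspace $X_{\le n}^{(k)}$ of the paracompact Hausdorff space $X_{\le n}$ with coefficient group $M=H^{i-j}(X_{>n})$, identifies each summand with $H^j\bigl(X_{\le n}^{(k)};\,H^{i-j}(X_{>n})\bigr)$. Reassembling via the K\"unneth decomposition of $H^i(X_{\le n}^{(k)}\times X_{>n})$ produces the claimed isomorphism, which by construction coincides with the map induced by the inclusions $X_{\le n}^{(k)}\times X_{>n}\hookrightarrow V\times X_{>n}$.

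The main obstacle I anticipate is verifying the paracompactness hypothesis of \Cref{thm:bartik} for the non-compact products $V\times X_{>n}$: products of paracompact spaces need not be paracompact, and since the $X_m$ are not assumed locally finite they need not be metrizable. The inverse-limit presentation of $V\times X_{>n}$ combined with the hereditary paracompactness of CW~complexes should resolve this, mirroring the argument already in place for $X$ itself; everything else in the proof is a formal manipulation of K\"unneth, direct limits, and tautness.
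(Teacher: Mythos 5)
Your proposal is correct and follows essentially the same route as the paper: apply Bartik's K\"unneth formula (\Cref{thm:bartik}) to $V\times X_{>n}$ and to $X^{(k)}_{\le n}\times X_{>n}$, commute the direct limit over $V$ with the finite direct sum, and invoke tautness for the closed subspace $X^{(k)}_{\le n}\subseteq X_{\le n}$ with coefficients $H^{i-j}(X_{>n})$. Your verification of the paracompactness of $V\times X_{>n}$ via the inverse-limit presentation by subspaces of CW~complexes (hereditarily paracompact, then the Dydak--Geoghegan lemma) is exactly the argument the paper uses.
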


\begin{proof}
  Note that $V\times X_{>n}$ is the inverse limit of the spaces~$V\times X_{n+1}\times\dots\times X_{m}$ with $m\ge n+1$.
  The latter are subspaces of CW~complexes and therefore hereditarily paracompact, which ensures that $V\times X_{>n}$
  itself is paracompact.

  We may therefore appeal to \Cref{thm:bartik}, which together with tautness gives isomorphisms
  \begin{align*}
    \dlim_{V} H^{i}(V\times X_{>n}) &= \dlim_{V} \bigoplus_{j} H^{j}\bigl(V;H^{i-j}(X_{>n})\bigr) \\
    &= \bigoplus_{j} H^{j}\bigl(X^{(k)}_{\le n};H^{i-j}(X_{>n})\bigr) = H^{i}(X^{(k)}_{\le n}\times X_{>n}).
    \qedhere
  \end{align*}
\end{proof}

\begin{lemma}
  \label{thm:iota}
  For any~$i\le k$ the inclusion~$\iota\colon X^{(k)}\hookrightarrow X$ induces an isomorphism
  \begin{equation*}
    \iota^{*}\colon H^{i}(X) = H^{i}(X^{(k)}).
  \end{equation*}
\end{lemma}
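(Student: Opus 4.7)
The plan is to combine the tautness property of \v{C}ech cohomology with the two preceding lemmas. The key geometric fact is that $X^{(k)}=\prod_{m} X_{m}^{(k)}$ is a product of \emph{finite} CW~complexes (each $X_{m}$ is of finite type and $X_{m}^{(k)}$ is its $(k+2)$-skeleton), and hence is compact by Tychonoff's theorem. Moreover, $X^{(k)}$ is closed in the paracompact space~$X$, so tautness yields
\[
  H^{i}(X^{(k)}) = \dlim_{U} H^{i}(U),
\]
where $U$ ranges over the open neighborhoods of $X^{(k)}$ in~$X$, with limiting map equal to $\iota^{*}$ precomposed with the canonical restriction from $H^{i}(X)$.

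Next I would show that neighborhoods of the form $V\times X_{>n}$, with $V$ an open neighborhood of $X_{\le n}^{(k)}$ in $X_{\le n}$, form a cofinal subfamily of all open neighborhoods of $X^{(k)}$ in~$X$. Given an arbitrary~$U$, compactness of $X^{(k)}$ lets me cover it by finitely many basic product open sets contained in~$U$; taking $n$ to be the largest index appearing in these finitely many basic sets, and amalgamating their projections into $X_{\le n}$, produces a set of the required form inside~$U$.

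Once cofinality is established, the direct limit factors as
\[
  H^{i}(X^{(k)}) \;=\; \dlim_{n}\dlim_{V} H^{i}(V\times X_{>n}).
\]
By \Cref{thm:iso-dlim-V}, the inner limit equals $H^{i}(X_{\le n}^{(k)}\times X_{>n})$ whenever $i\le k$; by \Cref{thm:iso-X-Xkn}, each such group is identified with $H^{i}(X)$ via the inclusion, and the transition maps in~$n$ are compatible with these identifications, so they become the identity. Chasing through the restriction maps shows that the resulting composite $H^{i}(X)\to H^{i}(X^{(k)})$ is precisely $\iota^{*}$.

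The main technical obstacle is the cofinality step, since the product topology has an unwieldy neighborhood basis. This is precisely the point where compactness of~$X^{(k)}$---and hence the finite-type hypothesis on each~$X_{m}$---is essential; the remaining bookkeeping just assembles the two previous lemmas and checks naturality of the various restriction maps.
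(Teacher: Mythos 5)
Your argument is correct and is essentially the paper's own proof: tautness for the compact subspace $X^{(k)}\subseteq X$, cofinality of the neighborhoods $V\times X_{>n}$ (which the paper also justifies by compactness of $X^{(k)}$ and the product topology, just more tersely), and then \Cref{thm:iso-dlim-V} and \Cref{thm:iso-X-Xkn} to identify the iterated colimit with $H^{i}(X)$. The only difference is that you spell out the cofinality step in more detail than the paper does; the argument is the same.
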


\begin{proof}
  Consider the open neighborhoods of $X^{(k)} \subseteq X$ that are of the form
  \begin{equation*}
    U = V \times X_{>n},
  \end{equation*}
  where $V$ is an open neighborhood of $X_{\leq n}^{(k)}$ in $X_{\leq n}$, for some $n$.
  The collection $\mathcal{U}$ of all such $U$ is cofinal in the directed set of all open neighborhoods of $X^{(k)}$,
  by compactness of~$X^{(k)}$ and the definition of the product topology.
  Let $\mathcal{U}_n\subseteq \mathcal{U}$ be the subset of all $U$ that can be written in the above form for a given $n$,
  so $\mathcal{U}_n \subset \mathcal{U}_{n'}$ for $n\leq n'$.
  In this notation, \Cref{thm:iso-dlim-V} reads
  \begin{equation*}
    \dlim_{U\in \mathcal{U}_n} H^i(U) = H^{i}(X^{(k)}_{\le n}\times X_{>n})
  \end{equation*}
  for all $i\leq k$.

  Using this identity together with \Cref{thm:iso-X-Xkn} and the tautness property for~$X^{(k)} \subseteq X$ gives
  \begin{align*}
    H^i(X^{(k)}) &\isom \dlim_{U\in \mathcal{U}} H^i(U) = \dlim_n \dlim_{U\in \mathcal{U}_n} H^i(U) \\
    &\isom \dlim_n H^i(X_{\leq n}^{(k)} \times X_{>n})
    = \dlim_n H^i(X) = H^{i}(X)
  \end{align*}
  for all $i\leq k$, which completes the proof.
\end{proof}

Now we will show that the natural homomorphism $\alpha\colon \dlim_n H^kX_{\leq n} \to H^kX$
is an isomorphism for any fixed degree $k$, proving Theorem~A.  
We have a commuting square
\begin{equation}\label{e.square}
\begin{tikzcd}
  \displaystyle{\dlim_n H^k(X_{\leq n})} \ar[d,"j", center yshift=-1.1ex, start anchor=center] \ar[r,"\alpha"] & H^kX \ar[d,"\iota^*"] \\
  \displaystyle{\dlim_n H^k(X_{\leq n}^{(k)})} \ar[r,"\beta"] & H^kX^{(k)}.
\end{tikzcd}
\end{equation}
Since $X^{(k)} = \ilim_n X_{\leq n}^{(k)}$ is a limit of compact spaces, the homomorphism $\beta$ is an isomorphism by the continuity property.
The homomorphism $j$ is a colimit of isomorphisms $H^k(X_{\leq n}) \to H^k(X_{\leq n}^{(k)})$, so it is an isomorphism itself.
Finally, the map~$\iota^{*}$ is an isomorphism by \Cref{thm:iota}, and we conclude that so is $\alpha$.
\qed

\section{Flag manifolds}\label{s.flags}

Infinite-dimensional flag manifolds are closely related to products of projective spaces.  Let $\Fl(1,\ldots,n;\C^\infty)$ be the space parametrizing chains of subspaces $V_1 \subset \cdots \subset V_n \subset \C^\infty$ with~$\dim V_{i}=i$, topologized as a union
\[
  \Fl(1,\ldots,n;\C^\infty) = \bigcup_m \Fl(1,\ldots,n;\C^m).
\]
A standard calculation shows that
\[
  H^*\Fl(1,\ldots,n;\C^\infty) = \Z[t_1,\ldots,t_n]
\]
is again a polynomial ring in $n$~variables of degree~$2$; see, e.g., \cite[\S2.5]{ecag}.
Since the space $\Fl(1,\ldots,n;\C^\infty)$ is locally contractible, paracompact, and Hausdorff, any usual cohomology theory will do: in particular, singular and \v{C}ech cohomology agree.

The {\it infinite flag manifold} $\Fl(\C^\infty)$ parametrizes chains of subspaces $V_1 \subset V_2 \subset \cdots \subset \C^\infty$, with $\dim V_i = i$, and is topologized as an inverse limit of partial flag manifolds:
\[
  \Fl(\C^\infty) = \ilim_n \Fl(1,\ldots,n;\C^\infty),
\]
where the maps in the inverse system are projections onto the first part of a chain.  As with $(\C\P^\infty)^{\times \N}$, this space is no longer locally contractible, so the choice of cohomology theory matters.

\begin{theoremAlph}
  \label{thm:B}
  Using \v{C}ech cohomology with coefficients in a countable principal ideal domain $R$, we have
  \[
    H^*\Fl(\C^\infty) = H^*(\C\P^\infty)^{\times\N} = R[t_1,t_2,\ldots],
  \]
  a polynomial ring in countably many variables of degree $2$.
\end{theoremAlph}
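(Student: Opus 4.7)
The plan is to exhibit an explicit map $\phi\colon(\C\P^\infty)^{\times\N}\to\Fl(\C^\infty)$ that is a homotopy equivalence on every finite stage, and combine this with the Corollary to \Cref{thm:A} via an argument parallel to the proof of \Cref{thm:A} itself.  Fix once and for all a decomposition $\C^\infty=\bigoplus_{i\ge1}W_{i}$ with each $W_i\isom\C^\infty$, together with identifications $\C\P^\infty\isom\P(W_i)$, and set
\[
  \phi_n(L_1,\ldots,L_n) = (L_1\subset L_1\oplus L_2\subset\cdots\subset L_1\oplus\cdots\oplus L_n).
\]
Each $L_i$ lives in a distinct summand, so every partial sum has the required dimension.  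The maps $\phi_n$ commute with the canonical projections onto the first $n-1$ factors on either side, and hence assemble into a continuous map $\phi$ on the inverse limits.

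Each $\phi_n$ is a homotopy equivalence.  Both source and target are simply connected CW complexes of finite type, with cohomology ring $\Z[t_1,\ldots,t_n]$, and the tautological quotient line bundle $V_i/V_{i-1}$ on $\Fl(1,\ldots,n;\C^\infty)$ pulls back along $\phi_n$ to the tautological bundle on the $i$th factor of $(\C\P^\infty)^{\times n}$.  Thus $\phi_n^{*}$ is a ring isomorphism; by the universal coefficient theorem applied to finitely generated modules this is also an isomorphism on integral homology, so Whitehead's theorem upgrades it to a homotopy equivalence.

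The main obstacle is the passage to the limit.  Homotopy equivalences on the finite stages do not, a priori, produce a homotopy equivalence of inverse limits, nor an isomorphism on \v{C}ech cohomology of those limits.  I would replay the proof of \Cref{thm:A} for the tower~$\{\Fl(1,\ldots,n;\C^\infty)\}_n$: the Schubert cell decomposition provides CW structures of finite type, and the compact subcomplexes $\Fl(1,\ldots,n;\C^m)$ for $m$ large relative to a fixed degree~$k$ play the role of the skeletal approximations $X_{\le n}^{(k)}$.  The analogs of \Cref{thm:iso-X-Xkn}, \Cref{thm:iso-dlim-V}, and \Cref{thm:iota} need to be proved in the absence of a product decomposition of $\Fl(\C^\infty)$; one route is to use the $\C\P^\infty$-fiber bundle structure of $\Fl(1,\ldots,n+1;\C^\infty)\to\Fl(1,\ldots,n;\C^\infty)$ with a Leray-type argument in place of the K\"unneth formula \Cref{thm:bartik}, another is to transport the product-case statements along the homotopy equivalences $\phi_n$, using their compatibility in~$n$ to match the systems of neighborhoods and compact approximations.

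Once the analog of \Cref{thm:A} is established for the flag tower, we obtain
\[
  H^{*}\Fl(\C^\infty) \isom \dlim_n H^{*}\Fl(1,\ldots,n;\C^\infty) = \dlim_n \Z[t_1,\ldots,t_n] = \Z[t_1,t_2,\ldots],
\]
completing the proof via the Corollary.
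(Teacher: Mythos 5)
Your finite-stage comparison is fine: the map $\phi_n$ does pull the line bundles $V_i/V_{i-1}$ back to the tautological bundles, so $\phi_n^*$ is an isomorphism and Whitehead's theorem makes $\phi_n$ a homotopy equivalence, compatible with the projections. But the step you yourself flag as "the main obstacle" is exactly where the proof is missing, and neither of your two suggested routes closes it. Level-wise homotopy equivalences of towers do \emph{not} induce isomorphisms on \v{C}ech cohomology of the inverse limits --- that failure is the whole point of this paper (see the footnote counterexample in the introduction), so "transport the product-case statements along the $\phi_n$" begs the question: an equivalence produced abstractly by Whitehead's theorem comes with no homotopy inverse and no homotopies compatible with the bonding maps, and without that compatibility you cannot match neighborhoods or compact approximations across the two towers, nor conclude anything about the limit spaces (which are not locally contractible, so \v{C}ech cohomology is not even invariant under weak equivalence). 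Your other route, replaying the proof of \Cref{thm:A} with a Leray/Leray--Hirsch argument in place of the K\"unneth formula \Cref{thm:bartik}, is precisely the approach the paper declines to pursue, remarking that it seems feasible only with $\Q$ coefficients; over $\Z$ it would require a Leray--Hirsch theorem in \v{C}ech cohomology for non-compact paracompact, non-locally-contractible total spaces (the analog of Bartik's theorem), and also the analog of $X^{(k)}_{\le n}\times X_{>n}$ is no longer a product, so \Cref{thm:iso-X-Xkn} and \Cref{thm:iso-dlim-V} do not carry over without new input that you have not supplied.

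The paper's resolution is different in exactly this respect: it constructs, by explicit linear algebra, homotopy equivalences of the \emph{systems} whose homotopy inverses and homotopies strictly respect the projections, so that the limits themselves are homotopy equivalent and the homotopy axiom for \v{C}ech cohomology finishes the argument. Concretely, it interposes the split flag manifolds $H^{\circ\circ}_n/T_n$, retracts them onto $\Fl(1,\ldots,n;\C^\infty)$ by a Gram--Schmidt deformation done simultaneously in all columns (hence compatibly in $n$), and shows the inclusion $H^{\circ\circ}\hookrightarrow H^\circ$ is a $T$-equivariant homotopy equivalence via an explicit column-shuffling map and straight-line homotopies, all defined directly on the limit spaces. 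To repair your proof you would either need to build such system-compatible homotopy inverses and homotopies for your $\phi_n$ (Whitehead gives you none of this), or actually carry out the integral Leray-type argument, which is a substantial new technical undertaking rather than a routine adaptation of \Cref{thm:A}.
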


With $\Q$ coefficients, it seems feasible to prove this directly by following the pattern of the previous argument, using a version of the Leray--Hirsch theorem in place of the K\"unneth formula, but we have not pursued this line of reasoning.  Instead, we will deduce \Cref{thm:B} from the calculation of $H^*(\C\P^\infty)^{\times \N}$ by constructing a homotopy equivalence between $\Fl(\C^\infty)$ and $(\C\P^\infty)^{\times \N}$.  Here we will be using the fact that \v{C}ech cohomology satisfies the homotopy axiom \cite[Theorem~6.5.6]{spanier}.

In general, suppose $X_n$ and $Y_n$ are two inverse systems, with respective limits $X = \ilim X_n$ and $Y=\ilim Y_n$.
If $f_n \colon X_n \to Y_n$ and $g_n \colon Y_n \to X_n$ are homotopy inverse maps of the systems with homotopies respecting the inverse systems,
then the induced maps $f\colon X \to Y$ and $g\colon Y \to X$ are homotopy inverses.

To prove \Cref{thm:B}, it suffices to construct homotopy equivalences between the partial flag manifold $\Fl(1,\ldots,n;\C^\infty)$ and $(\C\P^\infty)^{\times n}$, compatible with the projections.

Let $H_n = \Hom(\C^n,\C^\infty)$ be the vector space of $\infty\times n$ matrices with finitely many non-zero entries. Define the subspaces $H^{\circ\circ}_n \subset H^\circ_n \subset H_n$ by
\[
H^{\circ\circ}_n = \{ \text{linear } f\colon \C^n \to \C^\infty \,|\, f(v) \neq 0 \text{ for all }v\neq 0\}
\]
and
\[
H^\circ_n = \{ \text{linear } f\colon \C^n \to \C^\infty \,|\, f(\ee_i) \neq 0 \text{ for }1\leq i\leq n\},
\]
where $(\ee_{i})$ denotes the standard basis for~$\C^{n}$.
So $H^{\circ\circ}_n$ is the space of full-rank matrices---that is, of linear embeddings of $\C^n$ in $\C^\infty$---and $H^\circ_n$ is the space of matrices with all columns nonzero.  These spaces form inverse systems, via maps $H_n \to H_{n'}$ for $n\geq n'$, by projecting onto the first $n'$ columns.

The group $B_n$ of upper-triangular matrices in $GL_n$ acts freely on $H^{\circ\circ}_n$ by right multiplication, with quotient
\[
  H^{\circ\circ}_n/B_n = \Fl(1,\ldots,n;\C^\infty).
\]
The diagonal torus $T_n = (\C^*)^n$ also acts, with quotient
\[
 H^{\circ\circ}_n/T_n = \Fl^{\rm split}(1,\ldots,n;\C^\infty),
\]
where $\Fl^{\rm split}(1,\ldots,n;\C^\infty)$ is the ``split'' flag manifold, parametrizing $n$ linearly independent lines $L_1$,~\dots,~$L_n$ in $\C^\infty$.
We equip it with the quotient topology coming from the above identity.  
There is a natural projection $\Fl^{\rm split}(1,\ldots,n;\C^\infty) \to \Fl(1,\ldots,n;\C^\infty)$, defined by $V_i = L_1\oplus \cdots \oplus L_i$.  Choose a Hermitian metric on $\C^\infty$---say, by writing $\C^\infty$ as the union of $\C^m$ according to the standard bases, and equipping each $\C^m$ with the standard Hermitian metric.  This defines a section $\sigma$ of the projection map, by splitting the chain $V_1 \subset \cdots \subset V_n$ orthogonally with respect to the metric.  Using Gram--Schmidt orthonormalization, this makes $\Fl(1,\ldots,n;\C^\infty)$ a deformation retract of $\Fl^{\rm split}(1,\ldots,n;\C^\infty)$.  (To ensure that the deformation is compatible with the inverse system, one uses Gram--Schmidt to {\it simultaneously} move all the lines $L_i$ into orthogonal position.)

Let $\Fl(\C^\infty)$ be the inverse limit of the spaces~$\Fl(1,\ldots,n;\C^\infty)$, and define $\Fl^{\rm split}(\C^\infty)$, $H$, $H^{\circ}$ and $H^{\circ\circ}$ analogously.
Note that $\Fl(\C^\infty)$ is the inverse limit of the system~$H^{\circ\circ}_{n}/B_{n}$, and $\Fl^{\rm split}(\C^\infty)$ that of the system~$H^{\circ\circ}_{n}/T_{n}$.

The sections $\sigma$ described above are compatible with the inverse system maps: for $n\geq n'$ the diagram
\[
\begin{tikzcd}
  \Fl^{\rm split}(1,\ldots,n;\C^\infty)
    \ar[r]
    \ar[d]
  &
  \Fl^{\rm split}(1,\ldots,n';\C^\infty)
    \ar[d]  \\
  \Fl(1,\ldots,n;\C^\infty)
    \ar[r]
    \ar[u, bend left=20, dashed, "\sigma"]
  &
  \Fl(1,\ldots,n';\C^\infty)
    \ar[u, bend right=20, dashed, swap, "\sigma"]
\end{tikzcd}
\]
commutes.  Taking inverse limits, it follows that the projection $\Fl^{\rm split}(\C^\infty) = H^{\circ\circ}/T \to \Fl(\C^\infty) = H^{\circ\circ}/B$ is a homotopy equivalence.

On the other hand, $T_n$ also acts on $H^\circ_n$, with quotient
\[
  H^\circ_n/T_n = (\C\P^\infty)^{\times n}.
\]
For the inverse limit we get $H^\circ/T=(\C\P^\infty)^{\times \N}$.
The projections making $H^{\circ\circ}_n$ and $H^\circ_n$ inverse systems induce the projections of flag manifolds and products of projective spaces.

Now it suffices to construct a homotopy equivalence between $\Fl^{\rm split}(1,\ldots,n;\C^\infty)$ and $(\C\P^\infty)^{\times n}$, compatible with projections.  The following lemma completes the proof of \Cref{thm:B}.

\begin{lem*}
  The inclusions $\alpha\colon H^{\circ\circ}_n \hookrightarrow H^\circ_n$ are $T_n$-equivariant homotopy equivalences, via maps which respect the inverse systems.
\end{lem*}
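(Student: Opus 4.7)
The plan is to build a $T_n$-equivariant, inverse-system-compatible homotopy inverse $\beta_n\colon H^\circ_n\to H^{\circ\circ}_n$ to $\alpha$, and then to construct the required homotopies via a three-step straight-line path whose intermediate terms split visibly into independent pieces.

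Fix a direct-sum decomposition $\C^\infty = A \oplus V_1 \oplus V_2 \oplus \cdots$ in which each summand is isomorphic to $\C^\infty$ (partition the standard basis into countably many infinite blocks). Choose injective $\C$-linear maps $\rho\colon\C^\infty\to A$ and $\nu_i\colon\C^\infty\to V_i$, and arrange $\rho$ so that no nonzero vector of~$\C^\infty$ is an eigenvector; for instance, let $A$ be the span of the odd-indexed basis vectors and set $\rho(e_n)=e_{2n+1}$, for which a short calculation with the index recursion $k\mapsto 2k$ shows $\rho|_A$ has no nonzero eigenvector at all. Define $\beta_n(f)(e_i):=\nu_i(f(e_i))$. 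Because the $V_i$ are linearly independent subspaces and each $\nu_i$ is injective, the $n$ columns of $\beta_n(f)$ lie in distinct summands $V_1,\dots,V_n$, so they are automatically linearly independent and $\beta_n(f)\in H^{\circ\circ}_n$. The $\nu_i$ do not depend on $n$, so $\{\beta_n\}$ respects the inverse-system projections; $T_n$-equivariance is immediate from $\C$-linearity of the $\nu_i$.

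Next, construct the homotopy from $\id$ to $\alpha\circ\beta_n$ by concatenating three straight-line segments
\[
  (1-t)f+t\,\rho f,\qquad \rho f + t\,\beta_n(f),\qquad (1-t)\,\rho f + \beta_n(f),
\]
for $t\in[0,1]$, passing through $\rho\circ f$ and $\rho\circ f+\beta_n(f)$. Each formula is linear in $f$ and column-wise, so the concatenated homotopy is $T_n$-equivariant and compatible with the inverse system, and its restriction to $H^{\circ\circ}_n$ gives the companion homotopy between $\id_{H^{\circ\circ}_n}$ and $\beta_n\circ\alpha$.

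The main verification---and the principal obstacle---is that every intermediate configuration stays in $H^\circ_n$ (respectively $H^{\circ\circ}_n$). For the first segment, a relation $\sum_i\mu_i\bigl((1-t)f(e_i)+t\rho f(e_i)\bigr)=0$ reduces to $(1-t)g+t\rho(g)=0$ with $g=\sum_i\mu_i f(e_i)$, which forces either $g=0$ (whence $\mu=0$ by the linear independence of the $f(e_i)$) or $\rho(g)=-\tfrac{1-t}{t}g$, a negative-real eigenvector ruled out by our choice of $\rho$. The same argument applied to a single column gives non-vanishing in $H^\circ_n$. For the second and third segments, the $i$-th column splits as a piece in $A$ plus a piece in $V_i$; since $A,V_1,\dots,V_n$ are independent summands and both $\rho f(e_i)\in A$ and $\nu_i(f(e_i))\in V_i$ are injective images of the nonzero vector $f(e_i)$, projecting the columns onto $A$ (for the second segment) or onto $V_i$ (for the third) recovers vectors that are already nonzero and pairwise independent, so non-vanishing and linear independence of the columns follow automatically.
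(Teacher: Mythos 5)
Your proof is correct, but it takes a genuinely different route from the paper's. Both constructions of the homotopy inverse are of the same flavor: the paper defines $\beta(f)=\nu\circ\hat f$ with $\hat f(v)=\sum_i v_i f(\ee_i)\otimes\ee_i$ and an explicit interleaving bijection $\N\times\N\to\N$, which---exactly like your $\beta_n(f)(\ee_i)=\nu_i(f(\ee_i))$---parks the $i$th column in a subspace depending only on $i$, so that linear independence of the columns is automatic; indeed, taking $V_i=\nu(\C^\infty\otimes\ee_i)$ and $\nu_i(w)=\nu(w\otimes\ee_i)$ essentially recovers the paper's $\beta$. The real difference is in the homotopies. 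The paper uses the single straight-line homotopy $tf+(1-t)\beta(f)$ and must then check, by a direct computation with the enumeration (using $\nu(i,j)>i$ and a lexicographic induction), that $t\phi+(1-t)\nu$ is injective for $t\neq1$; that computation is the only delicate point of their argument. You avoid it by detouring through $\rho\circ f$: your second and third segments stay in $H^{\circ\circ}_n$ (resp.\ $H^\circ_n$) for the trivial reason that projecting onto $A$ or onto $V_i$ separates the columns, while the first segment instead needs the extra hypothesis that $\rho$ is injective with no eigenvector of negative real eigenvalue---which your choice $\rho(\ee_n)=\ee_{2n+1}$ does satisfy (if $\rho(v)=\lambda v\neq0$ and $M$ is the largest index in the support of $v$, then $\ee_{2M+1}$ occurs in $\rho(v)$ but cannot occur in $\lambda v$). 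So your route trades the paper's one clever injectivity computation for a three-segment path plus an eigenvector-free embedding; both arguments are column-wise and independent of $n$, hence $T_n$-equivariant and compatible with the inverse systems, and both pass to the limit spaces. Two cosmetic slips, neither affecting correctness: the relevant index recursion for your $\rho$ is $k\mapsto 2k+1$, not $k\mapsto 2k$; and what the first segment actually requires is the eigenvector condition for $\rho$ on all of $\C^\infty$, not just for $\rho|_A$---though since any eigenvector with $\lambda\neq0$ lies in $\operatorname{im}\rho\subseteq A$ and injectivity of $\rho$ handles $\lambda=0$, your formulation suffices.
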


In fact, we will prove the lemma directly on the limit spaces $H^{\circ\circ}$ and $H^\circ$, in such a way that the constructions evidently restrict compatibly to $H^{\circ\circ}_n$ and $H^{\circ}_n$ for finite $n$.

\begin{proof}
  We construct a map $\beta\colon H^\circ \hookrightarrow H^{\circ\circ}$ as follows.  The idea is to send $f\colon \C^\infty \to \C^\infty$ (with $f(\ee_i)\neq 0$ for all~$i$) to an injective map $\hat{f}\colon\C^\infty \to \C^\infty\otimes \C^\infty$, and then compose with an isomorphism $\nu\colon \C^\infty\otimes \C^\infty \to \C^\infty$ to obtain $\beta(f)$.

  First, given any $f\in H = \Hom(\C^\infty,\C^\infty)$, let $\hat{f}\colon \C^\infty \to \C^\infty\otimes \C^\infty$ be defined by
  \[
    \hat{f}(v) = \sum_i v_i f(\ee_i)\otimes \ee_i.
  \]
Since each $f(\ee_i)$ is nonzero, the vectors $\{ f(\ee_i) \otimes \ee_i \}_{i=1,2,\ldots}$ are linearly independent.  So $\hat{f}$ is injective, as claimed.

  Next let $\nu\colon \N \times \N \to \N$ be the bijection defined by enumerating pairs in the order indicated by reading matrix coordinates below:
  \[
    \begin{array}{ccccc}
      1 & 3 & 6 & 10 & \cdots \\
      2 & 5 & 9 & \cdots \\
      4 & 8 & \cdots \\
      7 & \cdots \\
      \vdots
    \end{array}
  \]
  That is, the enumeration of $\N\times\N$ proceeds as
  \[
   (1,1), \, (2,1), \, (1,2), \, (3,1), \, (2,2), \, (1,3),\, (4,1), \, (3,2), \, (2,3), \, (1,4),\, \ldots
  \]
  Reusing notation, let $\nu\colon \C^\infty \otimes \C^\infty \xrightarrow{\sim} \C^\infty$ be the isomorphism defined by $\ee_i \otimes \ee_j \mapsto \ee_{\nu(i,j)}$.

  Given $f\in H^\circ$ and $v = (v_1,v_2,\ldots) \in \C^\infty$, define $\beta(f)$ by
  \[
    \beta(f)(v) = \nu\bigl(\hat{f}(v)\bigr) = \nu\big( v_1 f(\ee_1)\otimes \ee_1 + v_2 f(\ee_2) \otimes \ee_2 + \cdots \big).
  \]
  In terms of matrices, $\beta$ sends
  \begin{equation}\label{e.mtx}
  f= \left[ \begin{array}{cccc}
     f_{11} & f_{12} & f_{13} & \cdots \\
     f_{21} & f_{22} & f_{23} & \cdots \\
     f_{31} & f_{32} & f_{33} & \cdots \\
     \vdots & \vdots & \vdots & \ddots
    \end{array}
   \right]
   \qquad \text{to} \qquad
  \beta(f)= \left[ \begin{array}{cccc}
     f_{11} & 0 & 0 & \cdots \\
     f_{21} & 0 & 0 & \cdots \\
     0 & f_{12} & 0 & \cdots \\
     f_{31} & 0 & 0 & \cdots \\
     0 & f_{22} & 0 & \cdots \\
     0 &  0  & f_{13} & \cdots \\
     \vdots & \vdots & \vdots & \ddots
    \end{array}
   \right].
  \end{equation}
  Note that each column has only finitely many nonzero entries.
  Since no column of $f$ is zero, the columns of $\beta(f)$ are evidently linearly independent.

  Both $\alpha$ and $\beta$ are torus-equivariant, since they preserve the column-indices of entries.  And they are evidently compatible with the inverse system---in fact, the above description applies verbatim to maps $\beta\colon H_n^\circ \hookrightarrow H_n^{\circ\circ}$, defined analogously for all $n$.

  Finally, we construct homotopies $\alpha\circ \beta \simeq \id_{H^\circ}$ and $\beta\circ \alpha \simeq \id_{H^{\circ\circ}}$.  Since $\alpha$ is just a natural inclusion, we suppress it from the notation.  (So both $\alpha\circ \beta$ and $\beta\circ \alpha$ are given by the same formula that defines $\beta$; in matrices this is \eqref{e.mtx}.)  We use the straight line homotopy
  \[
    (f,t) \mapsto t f + (1-t) \beta(f).
  \]

  When $f \in H^\circ$, so each column is nonzero, it is easy to see the same is true of $t f + (1-t) \beta(f)$ for each $t\in [0,1]$.  (For $t=1$, this is true by assumption; for $t=0$ it holds because the entries of the columns of $\beta(f)$ are the same as those of $f$, but with $0$'s inserted.  For $0<t<1$, the first nonzero entry in column $j$ of $f$ appears strictly above the corresponding entry of $\beta(f)$, so it remains nonzero in $t f + (1-t)\beta(f)$---except if $j=1$ and the first nonzero entry is $f_{11}$ or $f_{21}$, but those entries are constant in $t$ for all $tf + (1-t)\beta(f)$.) So we have shown $\alpha\circ \beta \simeq \id_{H^\circ}$.

  For the other composition, we factor the linear maps $\beta(f)$ and $f$ as follows.  From the definition, $\beta$ factors as $\beta(f) = \nu\circ \hat{f}$.  Let $\phi\colon \C^\infty\otimes \C^\infty \to \C^\infty$ be defined by $\phi(\ee_i\otimes \ee_j) = \ee_i$, so $f = \phi\circ \hat{f}$.  The two factorizations are summarized by the diagram
  \[
  \begin{tikzcd}
  \C^\infty  \ar[rr,shift left, "\beta(f)"] \ar[rr,shift right, "f" below]\ar[rd,"\hat{f}" below] & & \C^\infty \\
   & \C^\infty\otimes \C^\infty \ar[ru,shift left, "\nu"] \ar[ru, shift right, "\phi" below].
  \end{tikzcd}
  \]
  The homotopy between $f$ and $\beta(f)$ factors as
  \[
    tf + (1-t)\beta(f) = (t\phi + (1-t)\nu)\circ\hat{f}.
  \]
  To show $\beta\circ\alpha \simeq \id_{H^{\circ\circ}}$, we must show $tf + (1-t)\beta(f)$ is injective whenever $f$ is; for this, it suffices to show that $t\phi + (1-t)\nu$ is injective for all $t\neq 1$.  We do this by direct computation.

  Given $v = \sum_{i,j} a_{ij} \ee_i\otimes\ee_j$, suppose $(t\phi + (1-t)\nu)(v)=0$ for some $t\neq 1$.  Expanding the coefficients, we obtain equations
  \begin{align*}
    a_{11} &= -t(a_{12} + a_{13} + \cdots ), \\
    a_{21} &= -t(a_{22} + a_{23} + \cdots ),
    \intertext{and}
    a_{ij} &=  \frac{t}{t-1}\sum_{\ell\geq1} a_{\nu(i,j),\ell}
  \end{align*}
  for all other $(i,j)$.  Since $\nu(i,j)>i$ for all $(i,j)$ not equal to $(1,1)$ or $(2,1)$, these linear equations express $a_{ij}$ as a combination of $a_{k,\ell}$ for pairs $(k,\ell)$ greater than $(i,j)$ in lexicographic order.  Since all but finitely many coefficients $a_{k,\ell}$ are $0$, this implies $v=0$, completing the proof of the lemma.
\end{proof}

\bigskip
\noindent
{\it Acknowledgements.}  DA thanks Bill Fulton, Mehmet Onat, and Liz Vivas for helpful conversations and correspondence, with special thanks to Matthias Wendt for detailed comments on an early draft.  We also thank the referee for reading carefully and providing several helpful suggestions and corrections.



\end{document}